\newtheorem{theorem}{Theorem}
\newtheorem{lem}{Lemma}
\newtheorem{defn}{Definition}
\newtheorem{ex}{Example}
\newtheorem{remark}{Remark}[section]
\title{An inertial self-adaptive algorithm for solving split feasibility problems and fixed point problems  in the class of demicontractive mappings}
\author{Vasile Berinde$^{1,2}$}
\begin{document}

\begin{abstract}
We propose a hybrid inertial self-adaptive algorithm for solving the split feasibility problem and fixed point problem in the class of demicontractive mappings. Our results are very general and extend several related results existing in literature from the class of nonexpansive or quasi-nonexpansive mappings to the larger class of demicontractive mappings. Examples to illustrate numerically the effectiveness of the new analytical results  are presented.
\end{abstract}
\maketitle \pagestyle{myheadings} \markboth{Vasile Berinde} {}

\section{Introduction}

Let $H_1$, $H_2$ be real Hilbert spaces, $C$, $Q$ nonempty convex closed subsets of $H_1$ and $H_2$, respectively, and $A : H_1\rightarrow H_2$ a bounded linear operator. The split feasibility problem ($SFP$, for short) is asking to find a point
\begin{equation}\label{sfp}
x\in C \textnormal{ such that } Ax\in Q.
\end{equation}
Under the hypothesis that the $SFP$ is consistent, i.e., \eqref{sfp} has a solution, this is usually denoted by
\begin{equation}\label{1.2}
SFP(C,Q):=\{x\in C \textnormal{ such that } Ax\in Q\},
\end{equation}
 to indicate the two sets involved.
 
 The split feasibility problem  includes many important problems in nonlinear analysis modelling a wide range of inverse problems originating in real world: signal processing, image reconstruction problem of X-ray tomography, statistical learning etc., a fact that challenged researchers to construct robust and efficient iterative algorithms that solve \eqref{sfp}.

Such an algorithm, known under the name of (CQ) algorithmhas been proposed by Byrne \cite{Byrne04}, who constructed it by using the fact thatthe  $SFP$ \eqref{sfp} is equivalent to the following fixed point problem
\begin{equation}\label{FPE}
x = P_C\left((I+\gamma A^*(P_Q-I)A\right)x, x\in C,
\end{equation}
where $P_C$ and $P_Q$ stand for the orthogonal (metric) projections onto the sets $C$ and $Q$, respectively, $I$ is the identity map, $\gamma$ is a positive constant and $A^*$ denotes the adjoint of $A$.

By simply applying the Picard iteration corresponding to the fixed point problem \eqref{FPE}, we get the  $(CQ)$ algorithm,  which is thus generated by an initial value $x_1\in H_1$ and the one step iterative scheme
\begin{equation}\label{1.4}
x_{n+1} = P_C\left((I+\gamma_n A^*(P_Q-I)A\right)x_n,  n\geq 0,
\end{equation}
where the step size $\gamma_n\in \left(0,\frac{2}{\|A\|^2}\right)$.

\noindent If, for example, one considers the function 
\begin{equation}\label{norma}
f(x)=\frac{1}{2} \|(I-P_Q) Ax\|^2,
\end{equation}
then we have
\begin{equation}\label{nabla}
\nabla f(x)=A^* (I-P_Q) Ax,
\end{equation}
which indicates the fact that \eqref{1.4} is a particular gradient projection type algorithm. 
Of course, this is valid in a more general case: if we have a Fr\' echet differentiable real-valued valued function $f:C\rightarrow \mathbb{R}$ and we search for a minimizer of the problem
\begin{equation}\label{minim}
\textnormal{find }\min\limits_{x\in C} f(x), 
\end{equation}
then by means of an equivalent fixed point formulation, i.e., 
\begin{equation}\label{fpp}
x=P_C\left(x-\gamma \nabla f(x)\right) 
\end{equation}
one obtains the gradient-projection algorithm
\begin{equation}\label{gpa}
x_{n+1}=P_C\left(x_n-\gamma \nabla f(x_n)\right), n\geq 0, 
\end{equation}
which coincides with \eqref{1.4} in the particular case of $f$ given by \eqref{norma}, see \cite{Xu11} for more details.

It is known that when the iteration mapping $$P_C\left((I+\gamma A^*(P_Q-I)A\right)$$ involved in the $(CQ)$ algorithm \eqref{1.4} is of nonexpansive type, then the $(CQ)$ algorithm converges strongly to a fixed point of it, that is, to a solution of the $SFP$ \eqref{sfp} (see \cite{Byrne04}, for more details). 

But in applications, there are at least two major difficulties in implementing the algorithm \eqref{1.4}:
\begin{enumerate}
\item the selection of the step size depends on the operator norm, and its computation is not an easy task at all; 
\item the implementation of the projections  $P_C$ and $P_Q$, depending on the geometry of the two sets $C$ and $Q$, could be very difficult or even impossible.
\end{enumerate}

In order to overcome the above mentioned computational difficulties in a gradient-projection type algorithm, researchers proposed some ways to avoid the calculation of $\|A\|$. Another way to surpass the computation of the norm of $A$ has been suggested by Lopez et al. \cite{Lopez}, who proposed the following formula for expressing the step size sequence $\gamma_n$:
\begin{equation}\label{1.5}
\gamma_n:=\frac{\rho_n f(x_n)}{\|\nabla f(x_n)\|^2},\,n\geq 1,
\end{equation}
where $\rho_n$ is a sequence of positive real numbers in the interval $(0,4)$.

Another fixed point approach for solving the $(SFP)$ \eqref{sfp} in the class of nonexpansive mappings is due to Qin et al. \cite{Qin}, who considered a viscosity type algorithm given by
\begin{equation}\label{1.6}
\begin{cases}
x_1\in C \textnormal{ arbitrary }\\
y_n=P_C \left((1-\delta_n) x_n-\tau_n A^*(I-P_Q)A x_n\right)+\delta_n S x_n,\\
x_{n+1}=\alpha_n g(x_n)+\beta_n x_n+\gamma_n y_n,\,n\geq 1,
\end{cases}
\end{equation}
where $g:C\rightarrow C$ is a Banach contraction, $T:C\rightarrow C$ is a nonexpansive mapping with $Fix\,(T)\neq \emptyset$, $\{\alpha_n\}$, $\{\alpha_n\}$, $\{\beta_n\}$, $\{\gamma_n\}$,  $\{\delta_n\}$ and $\tau_n$ are sequences in $(0,1)$ that satisfy some appropriate conditions, denoted by $(C_1)$-$(C_5)$. 

Under these assumptions,  Qin et al. \cite{Qin} proved that the sequence $\{x_n\}$ generated by the algorithm \eqref{1.6} converges strongly to some $x^*\in Fix\,(T)\cap SFP(C,Q)$ and $x^*$ is the unique solution of the variational inequality
\begin{equation}\label{1.7}
\langle x-x^*,g(x^*)-x^*\rangle \leq 0,\,\forall x\in Fix\,(T)\cap SFP(C,Q).
\end{equation}
Subsequently, Kraikaew et al. \cite{Kraikaew} have weakened the assumptions $(C_1)$, $(C_2)$ and $(C_4)$ in Lopez et al. \cite{Lopez} and obtained the same convergence result by a slightly simplified proof. 

More recently, Wang et al. \cite{Wang-Xu} extended the previous results in three ways:
\begin{enumerate}
\item by weakening the conditions on the parameters $\{\alpha_n\}$, $\{\alpha_n\}$, $\{\beta_n\}$. $\{\gamma_n\}$ and $\{\delta_n\}$ involved in the algorithm \eqref{1.6};
\item by inserting an inertial term in the algorithm \eqref{1.6} in such a way that for choosing the step size it is no more need to calculate the norm of the operator $A$; 
\item by considering the larger class of quasi-nonexpansive mappings instead of nonexpansive mappings (which were considered in the previous papers).
\end{enumerate}

\noindent Starting from the developments presented before, the following question naturally arises:
\medskip

{\bf Question.} Is it possible to extend the results in Wang et al. \cite{Wang-Xu} to more general classes of mappings that strictly include the class of quasi-nonexpansive mappings ?
\medskip

The aim of this paper is to answer this question in the affirmative, see Theorem \ref{th1} below and also its supporting illustration (Example \ref{ex1}). We actually show that we can establish a strong convergence theorem for Algorithm 1, which is obtained from the inertial algorithm  \eqref{1.6}  used in \cite{Wang-Xu}  by  inserting an averaged component. We are thus able to show that one can solve the split feasibility problem and the fixed point problem in the class of demicontractive mappings, too. 


Our main result (Theorem \ref{th1}) shows that the new algorithm  converges strongly to an element $x^*\in Fix\,(T)\cap SFP(C,Q)$ which uniquely solves the variational inequality \eqref{1.7}. 

By doing this, we improve significantly the previous related results in literature since, by considering averaged mappings in gradient projection type algorithms, one get important benefits, see the motivation in the excellent paper by Xu \cite{Xu11}.

\section{Preliminaries}

Throughout this section, $H$ denotes a real Hilbert space with norm and inner product denoted as usually by $\|\cdot\|$ and $\langle \cdot,\cdot\rangle$, respectively. Let $C\subset H$ be a closed and convex set and $T:C\rightarrow C$ be a self mapping. Denote by
$$
Fix\,(T)=\{x\in C: Tx=x\}
$$
the set of fixed points of  $T$. In the present paper we consider the classes of nonexpansive type mappings introduced by the next definition.

\begin{defn}\label{def1}
The mapping $T$ is said to be:
\smallskip

1) {\it nonexpansive} if 
\begin{equation}\label{ne}
\|Tx-Ty\|\leq \|x-y\|,\,\textnormal{ for all } x,y\in C.
\end{equation}

2) {\it quasi-nonexpansive} if $Fix\,(T)\neq \emptyset$ and 
\begin{equation}\label{qne}
\|Tx-y\|\leq \|x-y\|,\, \textnormal{ for all } x\in C \textnormal{ and } y\in Fix\,(T).
\end{equation}

3) {\it $k$-strictly pseudocontractive} of the Browder-Petryshyn type if there exists $k<1$ such that
\begin{equation}\label{strict}
\|Tx-Ty\|^2\leq \|x-y\|^2+k\|x-y-Tx+Ty\|^2, \forall x,y\in C.
\end{equation}

4) {\it $k$-demicontractive}  or {\it quasi $k$-strictly pseudocontractive}  (see \cite{BPR23}) if $Fix\,(T)\neq \emptyset$ and there exists  a positive number $k<1$ such that 
\begin{equation}\label{demi}
\|Tx-y\|^2\leq \|x-y\|^2+k\|x-Tx\|^2,
\end{equation}
for all $x\in C$ and $y\in Fix\,(T)$. 
\end{defn}
\medskip

For the scope of this paper, it is important to note that any quasi-nonexpansive mapping is demicontractive but the reverse is no more true, as shown by the following example. 

\begin{ex} [\cite{Ber24}, Example 2.5]\label{ex1}
Let $H$ be the real line with the usual norm and $C = [0, 1]$. Define $T$ on $C$ by $T x = \dfrac{7}{8}$, if $0\leq x<1$ and $T 1 = \dfrac{1}{4}$. Then: 
1) $Fix\,(T)\neq \emptyset$; 2) $T$ is demicontractive; 3) $T$ is not nonexpansive; 4) $T$ is not quasi-nonexpansive;  5) $T$ is not strictly pseudocontractive. 
\end{ex}

For more details and a complete diagram of the relationships between the mappings introduced in Definition \ref{def1}, we also refer to \cite{Ber24}.

The next lemmas will be useful in proving our main results in the next section.

\begin{lem} [\cite{MarinoXu}, Lemma 1.1]\label{lem1}

For any $x,y\in H$, we have
\smallskip

$
(1) \, \|x+y\|^2 \leq \|x\|^2 +2 \langle y,x+y\rangle;
$
\medskip

$
(2) \|t x+(1-t)y\|^2=t\|x\|^2+(1-t)\|y\|^2-t(1-t)\|x-y\|^2,\,\forall t\in [0,1].
$

\end{lem}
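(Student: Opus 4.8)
The plan is to establish both assertions by direct computation, relying only on the bilinearity and symmetry of the inner product together with the defining identity $\|z\|^2=\langle z,z\rangle$ valid in any Hilbert space $H$. No deeper structural property is needed, so I expect the proof to be entirely elementary; the only point requiring a little care is the bookkeeping of the scalar coefficients in part $(2)$.

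For part $(1)$, I would first expand the left-hand side as $\|x+y\|^2=\|x\|^2+2\langle x,y\rangle+\|y\|^2$ and the inner-product term on the right as $2\langle y,x+y\rangle=2\langle x,y\rangle+2\|y\|^2$. Subtracting the former from $\|x\|^2+2\langle y,x+y\rangle$ leaves exactly $\|y\|^2$, so the claimed inequality is equivalent to $\|y\|^2\ge 0$, which holds trivially. This computation also records that $(1)$ is in fact an equality up to the nonnegative remainder $\|y\|^2$, with equality precisely when $y=0$.

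For part $(2)$, I would expand the left-hand side by bilinearity to obtain $t^2\|x\|^2+2t(1-t)\langle x,y\rangle+(1-t)^2\|y\|^2$, and then expand the subtracted term on the right using $\|x-y\|^2=\|x\|^2-2\langle x,y\rangle+\|y\|^2$. Collecting the coefficients of $\|x\|^2$, $\|y\|^2$ and $\langle x,y\rangle$ separately, one checks that the coefficient of $\|x\|^2$ reduces via $t-t(1-t)=t^2$, the coefficient of $\|y\|^2$ via $(1-t)-t(1-t)=(1-t)^2$, and the cross term matches as $2t(1-t)$; hence both sides agree identically for every $t\in[0,1]$.

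Since each step is a routine algebraic manipulation, there is no substantive obstacle here: the lemma is a standard Hilbert-space identity, the second part being the familiar convex-combination form of the parallelogram law. I would simply present the two short computations above, which is precisely why this auxiliary result is quoted from \cite{MarinoXu} rather than developed at length.
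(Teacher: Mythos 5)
Your computations are correct: both parts follow by expanding $\|\cdot\|^2=\langle\cdot,\cdot\rangle$ and collecting coefficients, and your bookkeeping ($t-t(1-t)=t^2$, $(1-t)-t(1-t)=(1-t)^2$, cross term $2t(1-t)$) checks out, as does the observation that (1) differs from an equality only by the remainder $\|y\|^2\ge 0$. The paper itself offers no proof of this lemma---it is quoted verbatim from Marino and Xu---so your elementary verification is exactly the standard argument the citation points to, and nothing further is needed.
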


Let $C$ be a closed convex subset $H$ . Then the {\it nearest point} ({\it metric}) {\it projection}  $P_C$ from $H$ onto $C$ assigns to each $x\in H$ its nearest point in $C$, denoted by $P_Cx$, that is, $P_C x$ is the unique point in $C$ with the property
\begin{equation}\label{2.1}
\|x-P_C x\|\leq \|x-y\|,\, \textnormal{ for all } x\in H.
\end{equation}

The metrical projection has many important properties, of which we collect the following ones

\begin{lem} [\cite{Xu11}, Proposition 3.1]\label{lem2}
Given $x\in H$ and $y\in C$, we have:

$
(i) \quad z=P_C x \textnormal{ if and only if } \langle x-z,y-z\rangle \leq 0, \forall y\in C;
$

$
(ii) \quad \|x-P_C x\|^2 \leq \|x-y\|^2-\|y-P_C y\|^2, \forall y\in C;
$

$
(iii) \quad \langle x-y,P_C x-P_C  y\rangle \leq\|x-P_C x\|^2 \geq \|P_C x-P_C y\|^2, \forall y\in C.
$

\end{lem}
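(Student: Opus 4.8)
The plan is to derive all three items from the single variational characterization in part (i), which is the real content of the lemma; parts (ii) and (iii) then follow by routine Hilbert-space algebra with no further geometric input.

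First I would establish (i). For the forward implication, assume $z = P_C x$. Fix an arbitrary $y \in C$ and exploit convexity of $C$: for every $t \in (0,1]$ the point $z + t(y - z)$ lies in $C$, so by the minimality defining $P_C x$,
\[
\|x - z\|^2 \leq \|x - z - t(y-z)\|^2 = \|x-z\|^2 - 2t\langle x - z, y - z\rangle + t^2\|y-z\|^2.
\]
Cancelling $\|x-z\|^2$ and dividing by $t > 0$ gives $\langle x - z, y - z\rangle \leq \tfrac{t}{2}\|y-z\|^2$, and letting $t \to 0^+$ yields $\langle x - z, y - z\rangle \leq 0$. For the converse, assume $\langle x - z, y - z\rangle \leq 0$ for all $y \in C$; expanding $\|x - y\|^2 = \|x - z\|^2 - 2\langle x - z, y - z\rangle + \|z - y\|^2 \geq \|x - z\|^2$ shows $z$ is the nearest point of $C$ to $x$, hence $z = P_C x$.

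Next, for (ii) I would simply reuse this expansion with $z = P_C x$: since $\langle x - P_C x, y - P_C x\rangle \leq 0$ by (i),
\[
\|x - y\|^2 = \|x - P_C x\|^2 - 2\langle x - P_C x, y - P_C x\rangle + \|y - P_C x\|^2 \geq \|x - P_C x\|^2 + \|y - P_C x\|^2,
\]
which is precisely the stated inequality. Finally, for (iii), the firm nonexpansiveness of $P_C$, I would apply (i) twice. Writing $u = P_C x$ and $v = P_C y$, part (i) gives $\langle x - u, v - u\rangle \leq 0$ and $\langle y - v, u - v\rangle \leq 0$; adding these and regrouping the inner products around the common vector $u - v$ collapses the sum to $\langle x - y, u - v\rangle - \|u - v\|^2 \geq 0$, i.e. $\langle x - y, P_C x - P_C y\rangle \geq \|P_C x - P_C y\|^2$.

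The only genuinely delicate step in the whole argument is the passage $t \to 0^+$ in (i) (equivalently, differentiating the quadratic $t \mapsto \|x - z - t(y-z)\|^2$ at $0$); everything else is bookkeeping with the polarization identity. I also note that as displayed, items (ii) and (iii) carry evident typographical slips — $P_C y$ in place of $P_C x$ in the subtracted term of (ii), and a stray juxtaposition of $\leq$ and $\geq$ in (iii) — and the argument above proves them in their standard corrected form.
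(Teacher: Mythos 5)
Your argument is correct in every step, but there is nothing in the paper to compare it against: the paper states this lemma without proof, importing it (misprints included) from Xu \cite{Xu11}, Proposition 3.1. What you supply is the standard self-contained derivation, and it is the right one: (i) via the convexity trick $z+t(y-z)\in C$ followed by the limit $t\to 0^{+}$; (ii) by expanding $\|x-y\|^{2}$ around $P_{C}x$ and discarding the nonpositive cross term furnished by (i); and (iii) by adding the two instances of (i), written at $u=P_{C}x$ tested against $v=P_{C}y\in C$, and at $v=P_{C}y$ tested against $u\in C$. The one hypothesis you use silently is that, in the converse direction of (i), $z$ must be known to lie in $C$ before minimality and uniqueness of the nearest point yield $z=P_{C}x$; this is tacit in the lemma's formulation as well, so it is not a gap, but it deserves a word.

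Your diagnosis of the misprints is also correct, and it matters for the rest of the paper, because the corrected forms are precisely what is used later. As printed, (ii) is vacuous: $y\in C$ forces $P_{C}y=y$, so the subtracted term $\|y-P_{C}y\|^{2}$ vanishes and (ii) collapses to the defining property \eqref{2.1} of the projection. The inequality you prove, with $\|y-P_{C}x\|^{2}$ subtracted, is the one actually invoked in the proof of Lemma \ref{lem6}: the first inequality in the chain leading to \eqref{3.3} subtracts exactly the term $\|(I-P_{C})(\cdot)\|^{2}$, which is your corrected (ii) with $y=x^{*}$. Likewise (iii) as printed chains $\leq$ and $\geq$ meaninglessly; the firm-nonexpansiveness inequality $\langle x-y,P_{C}x-P_{C}y\rangle\geq\|P_{C}x-P_{C}y\|^{2}$ that you prove is what underlies the monotonicity estimate \eqref{3.4}, applied there to $I-P_{Q}$, whose firm nonexpansiveness is equivalent to that of $P_{Q}$.
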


\begin{remark}\label{rem1}
Property $(i)$ in Lemma \ref{lem2} shows that, for any $x\in H$, its projection on the closed convex set $C$  solves the variational inequality $ \langle x-z,y-z\rangle \leq 0, \forall y\in C$;

Property $(ii)$ in Lemma \ref{lem2} expresses the fact that $P_C$ is a {\it firmly nonexpansive} mapping, while property $(iii)$ shows that $P_C$ is $1$-inverse strongly monotone.

\end{remark}

\begin{lem} [\cite{Byrne04}]\label{lem3}
Let $f$ be given by \eqref{norma}. The $\nabla f$ is $\|A\|^2$-Lipschitzian.
\end{lem}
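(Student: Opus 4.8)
The plan is to estimate $\|\nabla f(x)-\nabla f(y)\|$ directly from the explicit formula $\nabla f(x)=A^*(I-P_Q)Ax$ recorded in \eqref{nabla}, by peeling off in turn the three ingredients that compose $\nabla f$: the adjoint $A^*$, the residual operator $I-P_Q$, and the operator $A$ itself. First I would write
\[
\nabla f(x)-\nabla f(y)=A^*\bigl[(I-P_Q)Ax-(I-P_Q)Ay\bigr]
\]
and apply submultiplicativity of the operator norm together with the identity $\|A^*\|=\|A\|$, which holds for any bounded linear operator between Hilbert spaces, to obtain
\[
\|\nabla f(x)-\nabla f(y)\|\leq \|A\|\,\bigl\|(I-P_Q)Ax-(I-P_Q)Ay\bigr\|.
\]

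The crucial middle step is to show that the residual operator $I-P_Q$ is nonexpansive. This is where I would invoke the firm nonexpansiveness of the metric projection noted in Remark \ref{rem1} (attached to Lemma \ref{lem2}): since $P_Q$ satisfies $\langle P_Q u-P_Q v,\,u-v\rangle\geq \|P_Q u-P_Q v\|^2$, expanding
\[
\|(I-P_Q)u-(I-P_Q)v\|^2=\|u-v\|^2-2\langle u-v,\,P_Q u-P_Q v\rangle+\|P_Q u-P_Q v\|^2
\]
and substituting the inequality makes the last two terms collapse to $-\|P_Q u-P_Q v\|^2\leq 0$, yielding $\|(I-P_Q)u-(I-P_Q)v\|\leq \|u-v\|$. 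Applying this with $u=Ax$ and $v=Ay$ gives $\|(I-P_Q)Ax-(I-P_Q)Ay\|\leq \|Ax-Ay\|$.

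Chaining the estimates together and using $\|Ax-Ay\|=\|A(x-y)\|\leq \|A\|\,\|x-y\|$ then produces
\[
\|\nabla f(x)-\nabla f(y)\|\leq \|A\|\cdot\|A\|\cdot\|x-y\|=\|A\|^2\,\|x-y\|,
\]
which is exactly the claimed Lipschitz bound. I do not anticipate a genuine obstacle here; the single point demanding care is the nonexpansiveness of $I-P_Q$, which is not an arbitrary assumption but follows from the inverse-strong-monotonicity of $P_Q$, and one must be careful to derive it from that property rather than presuppose it. Everything else is a routine application of the norm identity $\|A^*\|=\|A\|$ and submultiplicativity.
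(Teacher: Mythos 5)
Your proof is correct, and since the paper states Lemma~\ref{lem3} as a citation to Byrne's work without reproducing any proof, your argument supplies exactly the standard one from that reference: factor $\nabla f(x)-\nabla f(y)=A^*\bigl[(I-P_Q)Ax-(I-P_Q)Ay\bigr]$, use $\|A^*\|=\|A\|$, and reduce everything to the nonexpansiveness of $I-P_Q$, which you rightly derive from the firm nonexpansiveness (equivalently, $1$-inverse strong monotonicity) of the metric projection rather than assuming it. Note that the property you invoke is the one intended by Lemma~\ref{lem2}(iii) and Remark~\ref{rem1} of the paper (whose printed statement contains a typographical garble), so your proof is fully consistent with the paper's toolkit.
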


Denote, as usually, the weak convergence in $H$ by $\rightharpoonup$ and the strong convergence by $\rightarrow$. The next concept will be important in our considerations.

\begin{defn} \label{def2}
A mapping $S:C\rightarrow C$ is said to be demiclosed at $0$ in $C\subset H$ if, for any sequence $\{x_k\}$  in $C$, such that  $x_k \rightharpoonup x$,  and $Su_k\rightarrow 0$, we have $Sx = 0$.
\end{defn}

\begin{remark}
In the particular case $S=I-T$, then it follows that $x$ in Definition \ref{def2} is a fixed point of $T$.
\end{remark}

\begin{lem} [\cite{He13}, Lemma 7]\label{lem4}
Let $\{x_n\}$ be a sequence of nonnegative real numbers, for which we have
$$
x_{n+1}\leq (1-\Gamma_n) x_n+\Gamma_n \Lambda_n,\,n\geq 1,
$$
and
$$
x_{n+1}\leq  x_n-\Psi_n+\Phi_n,\,n\geq 1,
$$
where $\Gamma_n\in (0,1)$, $\Psi_n\subset [0,\infty)$, and $\{\Lambda_n\}$ and $\{\Phi_n\}$  are two sequences of real numbers with the following properties

$(i)$ $\sum\limits_{n=1}^{\infty} \Gamma_n=\infty$;\, $(ii)$ $\lim\limits_{n\rightarrow \infty} \Phi_n=0$;\,
$(iii)$ For any subsequence $\{n_k\}$ of $\{n\}$, $\lim\limits_{k\rightarrow \infty} \Psi_{n_k}\leq 0$ implies $\limsup\limits_{k\rightarrow \infty}\Lambda_{n_k}\leq 0$.
 
 Then $\lim\limits_{n\rightarrow \infty} x_n=0$.
 \end{lem}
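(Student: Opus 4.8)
The plan is to reduce the statement to the well-known Maingé/Saejung--Yotkaew contraction lemma, the only genuine input being the verification that condition $(iii)$, fed through the second (descent-type) inequality, supplies the hypothesis that lemma requires. Since $x_n\geq 0$ for every $n$, it suffices to prove $\limsup_{n\to\infty}x_n\leq 0$, which together with nonnegativity forces $\lim_{n\to\infty}x_n=0$.

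The key reduction is to show that for \emph{every} subsequence $\{n_k\}$ satisfying $\liminf_{k\to\infty}(x_{n_k+1}-x_{n_k})\geq 0$ one has $\limsup_{k\to\infty}\Lambda_{n_k}\leq 0$. To this end I would rewrite the second inequality as $\Psi_n\leq (x_n-x_{n+1})+\Phi_n$. Along such a subsequence $\limsup_k(x_{n_k}-x_{n_k+1})=-\liminf_k(x_{n_k+1}-x_{n_k})\leq 0$, while $\Phi_{n_k}\to 0$ by $(ii)$, so $\limsup_k\Psi_{n_k}\leq 0$; combined with $\Psi_n\geq 0$ this gives $\lim_k\Psi_{n_k}=0\leq 0$, and condition $(iii)$ then delivers $\limsup_k\Lambda_{n_k}\leq 0$, as required.

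With this reduction in hand I would split into two cases. If $\{x_n\}$ is eventually non-increasing it converges to some $a\geq 0$; applying the reduction to the full sequence (its differences tend to $0$, so the $\liminf$ hypothesis holds) yields $\limsup_n\Lambda_n\leq 0$, and then writing the first inequality as $x_{n+1}\leq x_n-\Gamma_n(x_n-\Lambda_n)$ and summing against $\sum\Gamma_n=\infty$ forces $a=0$ (if $a>0$ one eventually has $\Lambda_n\leq a/3$ and $x_n\geq 2a/3$, so $x_n-\Lambda_n\geq a/3$ and the telescoped right-hand side drives $x_n$ below $0$). If $\{x_n\}$ is not eventually monotone I would invoke the Maingé construction: set $\tau(n)=\max\{j\leq n:\ x_j\leq x_{j+1}\}$, so that $\tau(n)\to\infty$, $x_{\tau(n)}\leq x_{\tau(n)+1}$, and $x_n\leq x_{\tau(n)+1}$. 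Along $m_k:=\tau(n)$ the monotonicity gives $\liminf(x_{m_k+1}-x_{m_k})\geq 0$, hence $\limsup_k\Lambda_{m_k}\leq 0$ by the reduction; substituting $x_{m_k}\leq x_{m_k+1}$ into the first inequality and cancelling $\Gamma_{m_k}>0$ gives $x_{m_k+1}\leq\Lambda_{m_k}$, so $\limsup_k x_{m_k+1}\leq 0$, and since $x_n\leq x_{\tau(n)+1}$ this transfers to $\limsup_n x_n\leq 0$.

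The routine parts are the algebraic rearrangements and the telescoping estimate in the monotone case. The step I expect to demand the most care is the non-monotone case: the index map $\tau(n)$ must be shown to be well-defined for large $n$, to tend to infinity, and to satisfy the two comparison inequalities simultaneously. This Maingé-type bookkeeping is where the argument genuinely does its work, and it is also where one must be scrupulous about the distinction between the indices $m_k$ and $m_k+1$ when passing to the limit.
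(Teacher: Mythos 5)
Your proof is correct, and it is essentially the argument behind the result as cited: the paper itself gives no proof of this lemma (it is quoted from He and Yang \cite{He13}, Lemma 7), and your reduction of condition $(iii)$ through the descent inequality, combined with the Maing\'e index $\tau(n)=\max\{j\le n:\ x_j\le x_{j+1}\}$ in the non-monotone case, is precisely the standard route taken in that reference and its predecessors (Maing\'e, Saejung--Yotkaew). One small repair: your case split should be ``eventually non-increasing'' versus ``not eventually non-increasing'' rather than ``not eventually monotone,'' since an eventually increasing sequence escapes both of your stated cases; your Case 2 machinery handles it verbatim (with $\tau(n)=n$ for all large $n$), so this is a labelling fix rather than a genuine gap.
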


\begin{lem}[\cite{Ber23}, Lemma 3.2]\label{lem5}
Let $H$ be a real Hilbert space, $C\subset H$ be a closed and convex set. If $T:C\rightarrow C$ is $k$-demicontractive, then for any $\lambda\in (0,1-k)$, $T_{\lambda}$ is quasi-nonexpansive. 
\end{lem}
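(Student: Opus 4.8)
The plan is to first make explicit the object $T_{\lambda}$: in this averaged-mapping setting it denotes $T_{\lambda}=(1-\lambda)I+\lambda T$, so that $T_{\lambda}x=(1-\lambda)x+\lambda Tx$. With this in hand, the proof naturally splits into two parts, namely identifying the fixed-point set and then verifying the quasi-nonexpansive estimate.

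First I would record that $Fix\,(T_{\lambda})=Fix\,(T)$. Indeed, if $Tx=x$ then $T_{\lambda}x=(1-\lambda)x+\lambda x=x$; conversely, if $T_{\lambda}x=x$ then $(1-\lambda)x+\lambda Tx=x$ forces $\lambda(Tx-x)=0$, and since $\lambda>0$ we conclude $Tx=x$. Hence $Fix\,(T_{\lambda})=Fix\,(T)\neq\emptyset$, so the nonemptiness requirement in the definition of quasi-nonexpansiveness is met and, moreover, the set over which the estimate must hold is exactly $Fix\,(T)$, where the demicontractive inequality \eqref{demi} is available.

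The core computation is to estimate $\|T_{\lambda}x-y\|^2$ for $x\in C$ and $y\in Fix\,(T)$. Writing $T_{\lambda}x-y=(1-\lambda)(x-y)+\lambda(Tx-y)$ and applying the convex-combination identity of Lemma \ref{lem1}(2) with $t=1-\lambda$, together with $(x-y)-(Tx-y)=x-Tx$, I would obtain
\begin{equation*}
\|T_{\lambda}x-y\|^2=(1-\lambda)\|x-y\|^2+\lambda\|Tx-y\|^2-\lambda(1-\lambda)\|x-Tx\|^2.
\end{equation*}
Invoking the $k$-demicontractivity \eqref{demi}, that is $\|Tx-y\|^2\leq\|x-y\|^2+k\|x-Tx\|^2$, to bound the middle term and then collecting the $\|x-y\|^2$ contributions (whose coefficients sum to $1$) yields
\begin{equation*}
\|T_{\lambda}x-y\|^2\leq\|x-y\|^2+\lambda\bigl(\lambda-(1-k)\bigr)\|x-Tx\|^2.
\end{equation*}

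The final step, and the only one demanding any care, is the sign of the coefficient of $\|x-Tx\|^2$. Because $\lambda\in(0,1-k)$ we have $\lambda>0$ and $\lambda-(1-k)<0$, so the whole correction term is nonpositive and may be dropped, giving $\|T_{\lambda}x-y\|^2\leq\|x-y\|^2$; taking square roots establishes $\|T_{\lambda}x-y\|\leq\|x-y\|$ for all $x\in C$ and $y\in Fix\,(T_{\lambda})$, which is the assertion. I do not expect any genuine obstacle: the argument is essentially one application of the identity in Lemma \ref{lem1}(2) followed by the demicontractive bound, and the role of the hypothesis $\lambda<1-k$ is precisely to force the decisive sign, so that the averaging absorbs the surplus term $k\|x-Tx\|^2$ that separates demicontractivity from quasi-nonexpansiveness.
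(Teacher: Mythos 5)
Your proof is correct and complete: the identity of Lemma \ref{lem1}(2) applied to $T_{\lambda}x-y=(1-\lambda)(x-y)+\lambda(Tx-y)$, followed by the demicontractive bound \eqref{demi} and the sign analysis of $\lambda(\lambda-(1-k))$, together with the observation $Fix\,(T_{\lambda})=Fix\,(T)$, is exactly the standard argument. The paper itself states this lemma without proof, importing it from \cite{Ber23}, and your argument coincides with the canonical proof given there, so there is nothing to flag.
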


\section{Main results}

In order to solve the $(SFP)$ \eqref{sfp}, we consider the following self-adaptive inertial algorithm.

{\bf Algorithm 1.}

{\bf Step 1.} Take $x_0,x_1\in H_1$ arbitrarily chosen; let $n:=1$;

{\bf Step 2.} Compute $x_n$ by means of the following formulas

\begin{equation}\label{3.1}
\begin{cases}
u_n:=x_n+\theta_n(x_n-x_{n-1})\\
y_n:=P_C \left((1-\delta_n) u_n-\tau_n A^*(I-P_Q)A u_n\right)+\delta_n S_{\lambda} u_n,\\
x_{n+1}:=\alpha_n g(x_n)+\beta_n u_n+\gamma_n y_n.
\end{cases}
\end{equation}
with $S_{\lambda}=(1-\lambda) I+\lambda S$, $\lambda\in (0,1)$, 

\begin{equation}\label{3.2}
\theta_n:=
\begin{cases}
\min\left\{\theta,\dfrac{\varepsilon_n}{\|x_n-x_{n-1}\|}\right\}, \textnormal{ if } x_n\neq x_{n-1}\\
\theta, \textnormal{ otherwise},
\end{cases}
\end{equation}
$\theta\geq 0$ is a given number, $\tau_n=\dfrac{\rho_n f(x_n)}{\|f(u_n)\|^2}$, where $f$ is given by \eqref{norma},  $\rho_n\in (0,4)$ and $\{\alpha_n\}$, $\{\beta_n\}$,  $\{\gamma_n\}$,  $\{\delta_n\}$  are sequences in $(0,1)$ satisfying the following conditions
\smallskip

$(c_1)$ $\limsup\limits_{n\rightarrow \infty} \beta_n<1$; $(c_2)$ $\lim\limits_{n\rightarrow \infty} \dfrac{\varepsilon_n}{\alpha_n}=0$; 

$(c_3)$ $\lim\limits_{n\rightarrow \infty} \alpha_n=0$ and $\sum\limits_{n=1}^{\infty}=+\infty$; 

$(c_4)$ $0<\liminf\limits_{n\rightarrow \infty} \delta_n\leq \limsup\limits_{n\rightarrow \infty} \delta_n<1$;

$(c_5)$  $\alpha_n+\beta_n+\gamma_n=1$, $n\geq 1$.
\medskip

{\bf Step 3.} If $\nabla f(u_n)=0$, then Stop, otherwise let $n:=n+1$ and go to Step 2.
\medskip

The next technical Lemmas will be useful in proving our main result in this paper.

\begin{lem} \label{lem6}
Let $S:H_1\rightarrow H_1$ be a $k$-demicontractive mapping and  $\{x_n\}$ be the sequence generated by Algorithm 1. If $x^*\in Fix\,(S)$, then the sequence $\{\|x_n-x^*\|\}$ is bounded.
\end{lem}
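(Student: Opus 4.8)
The plan is to turn the three lines of Algorithm 1 into a scalar recursion of the form $a_{n+1}\le\bigl(1-\alpha_n(1-\rho)\bigr)a_n+\alpha_n M$ for $a_n:=\|x_n-x^*\|$, and then to conclude boundedness from the elementary fact that such a recursion cannot exceed $\max\{a_1,\,M/(1-\rho)\}$. Here $\rho\in[0,1)$ is the contraction constant of the viscosity map $g$, and I write $\nabla f(u_n)=A^*(I-P_Q)Au_n$. Two preliminary reductions are needed. First, since $Fix\,(S)=Fix\,(S_\lambda)$ and $\lambda\in(0,1)$ is admissible, Lemma \ref{lem5} makes $S_\lambda$ quasi-nonexpansive, so $\|S_\lambda u_n-x^*\|\le\|u_n-x^*\|$. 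Second, the inertial step together with the cut-off \eqref{3.2} yields $\theta_n\|x_n-x_{n-1}\|\le\varepsilon_n$ in every case, hence
\[
\|u_n-x^*\|\le\|x_n-x^*\|+\theta_n\|x_n-x_{n-1}\|\le\|x_n-x^*\|+\varepsilon_n.
\]

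The main step is the estimate $\|y_n-x^*\|\le\|u_n-x^*\|$ for the middle line of \eqref{3.1}. I would use that $x^*$, being a solution of the $SFP$, satisfies $P_Cx^*=x^*$ and $\nabla f(x^*)=0$, together with the nonexpansivity of $P_C$ and the inverse-strong-monotonicity inequality $\langle\nabla f(u_n),u_n-x^*\rangle\ge 2f(u_n)$, which follows from $Ax^*\in Q$ and Lemma \ref{lem3}. Expanding the square of the projection argument and substituting the self-adaptive choice $\tau_n=\rho_n f(u_n)/\|\nabla f(u_n)\|^2$ produces a quadratic in $\tau_n$ whose value is a nonpositive multiple of $f(u_n)^2/\|\nabla f(u_n)\|^2$ precisely because $\rho_n\in(0,4)$; combining this with the quasi-nonexpansivity of $S_\lambda$ from the first reduction and the weights $\delta_n$ then gives the desired bound.

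Substituting both estimates into $x_{n+1}=\alpha_n g(x_n)+\beta_n u_n+\gamma_n y_n$ and using $\alpha_n+\beta_n+\gamma_n=1$ together with $\|g(x_n)-x^*\|\le\rho\|x_n-x^*\|+\|g(x^*)-x^*\|$ gives
\[
\|x_{n+1}-x^*\|\le\bigl[1-\alpha_n(1-\rho)\bigr]\|x_n-x^*\|+\alpha_n\|g(x^*)-x^*\|+\varepsilon_n.
\]
By $(c_2)$ one has $\varepsilon_n=o(\alpha_n)$, so the last two terms are at most $\alpha_n M$ for a constant $M$ and all large $n$, and a one-line induction on the scalar recursion above bounds $\{\|x_n-x^*\|\}$.

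The step I expect to be the real obstacle is the middle estimate, because of the hybrid structure $P_C\!\left((1-\delta_n)u_n-\tau_n\nabla f(u_n)\right)+\delta_n S_\lambda u_n$: the factor $(1-\delta_n)$ sits inside the projection rather than outside as a clean convex weight, so matching $x^*$ forces one either to absorb a projection-error term of order $\delta_n$ or to exploit $0\in C$ (so that $(1-\delta_n)x^*\in C$ and $P_C((1-\delta_n)x^*)=(1-\delta_n)x^*$). Making the self-adaptive step size cancel the quadratic exactly in the presence of this factor, which is where the room $\rho_n\in(0,4)$ is consumed, while simultaneously keeping the $S_\lambda$ contribution under control, is the delicate calculation; everything else is bookkeeping with the defining conditions $(c_1)$--$(c_5)$.
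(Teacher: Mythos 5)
Your proof is correct and is essentially the paper's own argument: reduction to the quasi-nonexpansive $T=S_\lambda$ via Lemma \ref{lem5}, the estimate $\|y_n-x^*\|\le\|u_n-x^*\|$ obtained from the projection inequality together with $\langle\nabla f(u_n),u_n-x^*\rangle\ge 2f(u_n)$ and the self-adaptive choice of $\tau_n$ with $\rho_n\in(0,4)$, the inertial bound $\theta_n\|x_n-x_{n-1}\|\le\varepsilon_n$, and the closing $\max\{\|x_n-x^*\|,M_1\}$ induction; the only cosmetic difference is that the paper routes the convexity step through the auxiliary point $v_n=\frac{1}{1-\alpha_n}(\beta_n u_n+\gamma_n y_n)$, which it reuses later in Theorem \ref{th1}, while you apply the triangle inequality to the three-term combination directly. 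The obstacle you anticipated does not arise in the paper's proof, because there $P_C$ is applied to the \emph{entire} convex combination $(1-\delta_n)(u_n-\tau_n\nabla f(u_n))+\delta_n T u_n$ --- the placement of $\delta_n S_\lambda u_n$ outside the projection in \eqref{3.1} is a parenthesization slip --- so the inequality of Lemma \ref{lem2} applies directly with $x^*\in C$, and neither a $0\in C$ hypothesis nor a $\delta_n$-order error term is needed. One small misattribution: the inequality $\langle\nabla f(u_n),u_n-x^*\rangle\ge 2f(u_n)$ follows from the firm nonexpansivity of $I-P_Q$ (Lemma \ref{lem2}, Remark \ref{rem1}) combined with $Ax^*\in Q$, not from the Lipschitz property of Lemma \ref{lem3}.
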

\begin{proof}
Since $S$ is $k$-demicontractive, by Lemma \ref{lem5} we deduce that the averaged mapping $S_{\lambda}=(1-\lambda) I+\lambda S$  is also quasi-nonexpansive, for any $\lambda\in (0,1-k)$, and that $Fix\,(S)=Fix\,(S_{\lambda})$, for any $\lambda\in (0,1]$ (see for example \cite{Ber23}). 

In the following, to simplify writing, we shall denote $S_{\lambda}$ by $T$. So, $Fix\,(T)\neq \emptyset$ and let $x^*\in Fix\,(T)\cap SFP(C,Q)$. Let $y_n$ be defined by \eqref{3.1}. 

Then, by using \eqref{norma} and \eqref{nabla}, Lemma \ref{lem1} and Lemma \ref{lem2} and exploiting the fact that $T$ is quasi-nonexpansive, we have successively
$$
\|y_n-x^*\|^2=\|P_C\left((1-\delta_n)(u_n-\tau_n A^*(I-P_Q)Au_n\right)+\delta_n T u_n)-x^*\|^2
$$
$$
\leq \|\left((1-\delta_n)u_n-\tau_n A^*(I-P_Q)Au_n\right)+\delta_n T u_n)-x^*\|^2
$$
$$
-\|(I-P_C)\left((1-\delta_n)(u_n-\tau_n A^*(I-P_Q)Au_n)+\delta_n T u_n\right)\|^2
$$
$$
=\|\delta_n(T u_n-x^*)+(1-\delta_n)(u_n-\tau_n A^*(I-P_Q)Au_n-x^*)\|^2
$$
$$
-\|(I-P_C)\left((1-\delta_n)(u_n-\tau_n A^*(I-P_Q)Au_n)+\delta_n T u_n\right)\|^2
$$
$$
\leq \delta_n\| u_n-x^*\|^2+(1-\delta_n)\|u_n-\tau_n \nabla f(u_n)-x^*\|^2
$$
$$
-\delta_n (1-\delta_n)\|T u_n -u_n+\tau_n \nabla f(u_n)\|^2
$$
$$
-\|(I-P_C)\left((1-\delta_n)(u_n-\tau_n \nabla f(u_n))+\delta_n T u_n\right)\|^2
$$
$$
\leq \delta_n\| u_n-x^*\|^2+(1-\delta_n)(\|u_n-x^*\|^2+\tau_n^2 \|\nabla f(u_n)\|^2
$$
$$
-2\tau_n\langle \nabla f(u_n), u_n-x^*\rangle)-\delta_n (1-\delta_n)\|T u_n -u_n+\tau_n A^*(I-P_Q)Au_n\|^2
$$
\begin{equation}\label{3.3}
-\|(I-P_C)\left((1-\delta_n)(u_n-\tau_n \nabla f(u_n))+\delta_n T u_n\right)\|^2.
\end{equation}
On the other hand
$$
\langle \nabla f(u_n), u_n-x^*\rangle=\langle A^*(I-P_Q)Au_n, u_n-x^*\rangle
$$
\begin{equation}\label{3.4}
=\langle (I-P_Q)Au_n-(I-P_C) A x^*, Au_n-Ax^*\rangle\geq \|(I-P_Q)Au_n\|^2=2f(u_n).
\end{equation}
So, by inserting \eqref{3.4} in \eqref{3.3} we obtain
$$
\|y_n-x^*\|^2\leq \| u_n-x^*\|^2-4(1-\delta_n) \tau_n f(x_n)+(1-\delta_n)\tau_n^2 \|\nabla f(u_n)\|^2
$$
$$
-\delta_n (1-\delta_n)\|T u_n -u_n+\tau_n A^*(I-P_Q) Au_{n_k}\|^2
$$
$$
-\|(I-P_C)\left((1-\delta_n)(u_n-\tau_n \nabla f(u_n))+\delta_n T u_n\right)\|^2.
$$
$$
=\| u_n-x^*\|^2-(1-\delta_n)\rho_n(4-\rho_n)\cdot \frac{f^2(u_n)}{\|\nabla f(u_n)\|^2}
$$
$$
-\delta_n (1-\delta_n)\|T u_n -u_n+\tau_n A^*(I-P_Q) Au_{n_k}\|^2
$$
\begin{equation}\label{3.5}
-\|(I-P_C)\left((1-\delta_n)(u_n-\tau_n \nabla f(u_n))+\delta_n T u_n\right)\|^2.
\end{equation}
Now, having in view that $\rho_n\in (0,4)$ and $\delta_n\in (0,1)$, by \eqref{3.3} and \eqref{3.5} we deduce that
\begin{equation}\label{3.6}
\|y_n-x^*\|\leq \|u_n-x^*\|.
\end{equation}
Denote
\begin{equation}\label{veul}
v_n:=\dfrac{1}{1-\alpha_n}(\beta_n u_n+\gamma_n y_n)
\end{equation}
and apply Lemma \ref{lem1}, by keeping in mind condition $(c_5)$, to get
$$
\|v_n-x^*\|^2=\left\|\frac{\beta_n}{1-\alpha_n} u_n+\frac{\gamma_n}{1-\alpha_n}y_n-x^*\right\|^2
$$
$$
=\left\|\frac{\beta_n}{1-\alpha_n} (u_n-x^*)+\frac{\gamma_n}{1-\alpha_n}(y_n-x^*)\right\|^2
$$
\begin{equation}\label{3.7}
=\frac{\beta_n}{1-\alpha_n} \|u_n-x^*\|^2+\frac{\gamma_n}{1-\alpha_n}\|y_n-x^*\|^2-\frac{\beta_n}{1-\alpha_n} \cdot \frac{\gamma_n}{1-\alpha_n}\|u_n-y_n\|^2.
\end{equation}
Now, using the assumptions $(c_1)$-$(c_5)$, from the above inequality we get
$$
\|v_n-x^*\|^2\leq \frac{\beta_n}{1-\alpha_n} \|u_n-x^*\|^2+\frac{\gamma_n}{1-\alpha_n}\|y_n-x^*\|^2
$$
which, by using \eqref{3.6} and \eqref{3.5}, yields
 $$
\|v_n-x^*\|^2\leq \frac{\beta_n}{1-\alpha_n} \|u_n-x^*\|^2+\frac{\gamma_n}{1-\alpha_n}\|u_n-x^*\|^2
$$
$$
-(1-\delta_n)\rho_n(4-\rho_n)\cdot \frac{\gamma_n}{1-\alpha_n}\cdot \frac{f^2(u_n)}{\|\nabla f(u_n)\|^2}
$$
$$
-\delta_n (1-\delta_n)\frac{\gamma_n}{1-\alpha_n}\cdot\|T u_n -u_n+\tau_n \nabla f(u_n)\|^2
$$
$$
-\frac{\gamma_n}{1-\alpha_n}\cdot \|(I-P_C)\left((1-\delta_n)(u_n-\tau_n \nabla f(u_n))+\delta_n T u_n\right)\|^2
$$
$$
=\|u_n-x^*\|^2-(1-\delta_n)\rho_n(4-\rho_n)\cdot \frac{\gamma_n}{1-\alpha_n}\cdot \frac{f^2(u_n)}{\|\nabla f(u_n)\|^2}
$$
$$
-\delta_n (1-\delta_n)\frac{\gamma_n}{1-\alpha_n}\cdot\|T u_n -u_n+\tau_n \nabla f(u_n)\|^2
$$
\begin{equation}\label{eq-14}
-\frac{\gamma_n}{1-\alpha_n}\cdot \|(I-P_C)\left((1-\delta_n)(u_n-\tau_n \nabla f(u_n))+\delta_n T u_n\right)\|^2.
\end{equation}
The previous inequality implies
\begin{equation}\label{3.8}
\|v_n-x^*\|\leq \|u_n-x^*\|,\,n\geq 1.
\end{equation}
By \eqref{veul}, $(c_5)$ and the third equation in \eqref{3.1}, we obtain
\begin{equation}\label{3.8a}
x_{n+1}=\alpha_n g(x_n)+(1-\alpha_n) v_n,\,n\geq 1
\end{equation}
and so
$$
\|x_{n+1}-x^*\|=\|\alpha_n g(x_n)+(1-\alpha_n) v_n-x^*\|
$$
$$
=\|\alpha_n (g(x_n)-x^*)+(1-\alpha_n) (v_n-x^*)\|
$$
$$
\leq \alpha_n\| g(x_n)-x^*\|+(1-\alpha_n) \|v_n-x^*\|
$$
$$
\leq \alpha_n\| g(x_n)-g(x^*)\|+\alpha_n\| g(x^*)-x^*\|+(1-\alpha_n) \|v_n-x^*\|.
$$
Now, using the fact that $g$ is a $c$-contraction, we have
 $$
\|x_{n+1}-x^*\|\leq \alpha_n c\|x_n-x^*\|+\alpha_n\| g(x^*)-x^*\|
$$
$$
+(1-\alpha_n) \|x_n-x^*+\theta_n(x_n-x_{n-1})\|
$$
$$
\leq \alpha_n c\|x_n-x^*\|+\alpha_n\| g(x^*)-x^*\|+(1-\alpha_n) \|x_n-x^*\|+(1-\alpha_n)\theta_n \|x_n-x_{n-1}\|
$$
$$
\leq (1-\alpha_n (1-c))\|x_n-x^*\|+\alpha_n\| g(x^*)-x^*\|+\theta_n \|x_n-x_{n-1}\|.
$$
Denote $\varepsilon_n:=\theta_n \|x_n-x_{n-1}\|$. Then by the previous inequalities we get
$$
\|x_{n+1}-x^*\|\leq (1-\alpha_n (1-c))\|x_n-x^*\|
$$
\begin{equation}\label{3.9}
+\alpha_n (1-c)\left(\frac{\| g(x^*)-x^*\|}{1-c}+\frac{\varepsilon_n}{\alpha_n (1-c)}\right).
\end{equation}
Having in mind assumption $(c_2)$, take $M>0$ for which $\dfrac{\varepsilon_n}{\alpha_n}\leq M$, for all $n\geq 1$. Then, by denoting $M_1:=\dfrac{\| g(x^*)-x^*\|+M}{1-c}$, the inequality \eqref{3.9} yields
$$
\|x_{n+1}-x^*\|\leq (1-\alpha_n (1-c))\|x_n-x^*\|+\alpha_n (1-c)M_1
$$
$$
\leq \max\left\{\|x_n-x^*\|, M_1\right\},
$$
from which we easily obtain
\begin{equation}\label{3.10}
\|x_{n+1}-x^*\|\leq \max\left\{\|x_n-x^*\|, M_1\right\},\,n\geq 1,
\end{equation}
and this shows that $\{\|x_{n}-x^*\|\}$ is bounded.
\end{proof}

\begin{lem} \label{lem7}
Let $S:H_1\rightarrow H_1$ be a $k$-demicontractive mapping such that $I-T$ is demiclosed at zero, $g:H_1\rightarrow H_1$ is a $c$-Banach contraction and suppose that  $\{\alpha_n\}$, $\{\beta_n\}$,  $\{\gamma_n\}$,  $\{\delta_n\}$  are sequences in $(0,1)$ satisfying  conditions $(c_1)$-$(c_5)$ in Algorithm 1.





Let $x^*\in Fix\,(T)\cap SFP(C,Q)$, $\{x_n\}$ be the sequence generated by Algorithm 1, $f$ be defined by \eqref{norma} and let $\{v_n\}$ be the sequence given by \eqref{veul}. For $n\geq 1$, let us denote
$$
\Gamma_n:=2(1-c)\alpha_n;\, \Phi_n:=2\alpha_n\langle g(x_n)-v_n,x_{n+1}-x^*\rangle, 
$$
$$
\Lambda_n:=\frac{1}{2(1-c)}\left(\alpha_n\|g(x_n)-x^*\|^2+2\alpha_n \|g(x_n)-x^*\| \|v_n-x^*\|\right.
$$
$$
\left.+\alpha_n\|x_n-x^*\|^2+\frac{2\epsilon_n}{\alpha_n}\|v_n-x^*\|+2 \langle g(x^*)-x^*,v_{n}-x^*\rangle\right),
$$
and
$$
\Psi_n:=(1-\delta_n)\frac{\gamma_n}{1-\alpha_n}\rho_n(4-\rho)\frac{f^2(u_n)}{\|\nabla f(u_n)\|^2}
$$
$$
+\delta_n (1-\delta_n)\frac{\gamma_n}{1-\alpha_n}\cdot\|T u_n -u_n+\tau_n \nabla f(u_n)\|^2
$$
\begin{equation}\label{psi}
+\frac{\gamma_n}{1-\alpha_n}\cdot \|(I-P_C)\left((1-\delta_n)(u_n-\tau_n \nabla f(u_n))+\delta_n T u_n\right)\|^2.
\end{equation}

Then, for any subsequence $\{n_k\}$ of $\{n\}$, we have
\begin{equation}\label{3.11}
\limsup_{k\rightarrow \infty} \Lambda_{n_k}\leq 0,\, 
\end{equation}
whenever
\begin{equation}\label{3.12}
\lim_{k\rightarrow \infty} \Psi_{n_k}= 0.
\end{equation}
\end{lem}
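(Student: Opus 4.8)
The plan is to show that, along the prescribed subsequence, every summand defining $\Lambda_{n_k}$ except the inner product $2\langle g(x^*)-x^*,v_{n_k}-x^*\rangle$ vanishes, and then to prove that the $\limsup$ of that surviving term is nonpositive by exhibiting a weak cluster point lying in $Fix\,(T)\cap SFP(C,Q)$. First I would dispose of the vanishing terms: by Lemma \ref{lem6} the sequences $\{\|x_n-x^*\|\}$, $\{\|g(x_n)-x^*\|\}$ and, via \eqref{3.8}, $\{\|v_n-x^*\|\}$ are bounded. Since $\alpha_n\to 0$ by $(c_3)$ and $\varepsilon_n/\alpha_n\to 0$ by $(c_2)$ (where $\varepsilon_n=\theta_n\|x_n-x_{n-1}\|$), each of $\alpha_n\|g(x_n)-x^*\|^2$, $2\alpha_n\|g(x_n)-x^*\|\,\|v_n-x^*\|$, $\alpha_n\|x_n-x^*\|^2$ and $\tfrac{2\varepsilon_n}{\alpha_n}\|v_n-x^*\|$ tends to $0$. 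Hence $\limsup_k\Lambda_{n_k}=\tfrac{1}{1-c}\limsup_k\langle g(x^*)-x^*,v_{n_k}-x^*\rangle$, so it suffices to prove this last $\limsup$ is $\le 0$.

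Next I would decode the hypothesis $\Psi_{n_k}\to 0$. Since $\Psi_n$ is a sum of three nonnegative summands whose coefficients $(1-\delta_n)\tfrac{\gamma_n}{1-\alpha_n}\rho_n(4-\rho_n)$, $\delta_n(1-\delta_n)\tfrac{\gamma_n}{1-\alpha_n}$ and $\tfrac{\gamma_n}{1-\alpha_n}$ are bounded below by positive constants (from $(c_1)$, $(c_3)$, $(c_4)$, $(c_5)$ together with the fact that $\rho_n(4-\rho_n)$ stays away from $0$), each summand tends to $0$. This gives: (a) $\tfrac{f^2(u_{n_k})}{\|\nabla f(u_{n_k})\|^2}\to 0$; (b) $\|Tu_{n_k}-u_{n_k}+\tau_{n_k}\nabla f(u_{n_k})\|\to 0$; (c) $\|(I-P_C)w_{n_k}\|\to 0$, where $w_n:=(1-\delta_n)(u_n-\tau_n\nabla f(u_n))+\delta_n Tu_n$ is the argument of $P_C$ in the definition of $y_n$, so that (c) reads $\|w_{n_k}-y_{n_k}\|\to 0$. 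From \eqref{norma}, \eqref{nabla} and Lemma \ref{lem3} one has $\|\nabla f(u_n)\|\le\|A\|\,\|(I-P_Q)Au_n\|$, whence $\tfrac{f^2(u_n)}{\|\nabla f(u_n)\|^2}\ge\tfrac{f(u_n)}{2\|A\|^2}$, so (a) forces $f(u_{n_k})\to 0$, i.e. $\|(I-P_Q)Au_{n_k}\|\to 0$. Moreover $\|\tau_{n_k}\nabla f(u_{n_k})\|^2=\rho_{n_k}^2\,\tfrac{f^2(u_{n_k})}{\|\nabla f(u_{n_k})\|^2}\to 0$, and combining this with (b) yields $\|Tu_{n_k}-u_{n_k}\|\to 0$.

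Then comes the core step. I would choose a subsequence $\{n_{k_j}\}$ along which $\langle g(x^*)-x^*,v_{n_{k_j}}-x^*\rangle$ attains $\limsup_k\langle g(x^*)-x^*,v_{n_k}-x^*\rangle$, and, using boundedness of $\{u_n\}$, pass to a further weakly convergent subsequence (not relabelled) $u_{n_{k_j}}\rightharpoonup p$. Since $v_n-u_n=\tfrac{\gamma_n}{1-\alpha_n}(y_n-u_n)$ and, by (c) together with the convergences $\tau_{n_k}\nabla f(u_{n_k})\to 0$ and $Tu_{n_k}-u_{n_k}\to 0$, one has $y_{n_k}-u_{n_k}\to 0$, it follows that $v_{n_{k_j}}\rightharpoonup p$ and $y_{n_{k_j}}\rightharpoonup p$. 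Now $p\in Fix\,(T)$ because $\|(I-T)u_{n_{k_j}}\|\to 0$ and $I-T$ is demiclosed at $0$; $p\in C$ because $y_{n_{k_j}}=P_Cw_{n_{k_j}}\in C$ and $C$ is weakly closed; and $Ap\in Q$ because $A$ is bounded linear (so $Au_{n_{k_j}}\rightharpoonup Ap$), while $P_Q(Au_{n_{k_j}})=Au_{n_{k_j}}-(I-P_Q)Au_{n_{k_j}}\rightharpoonup Ap$ with $P_Q(Au_{n_{k_j}})\in Q$ and $Q$ weakly closed. Hence $p\in Fix\,(T)\cap SFP(C,Q)$, so the variational inequality \eqref{1.7} solved by $x^*$ applies at $x=p$, giving $\langle p-x^*,g(x^*)-x^*\rangle\le 0$. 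Therefore $\limsup_k\langle g(x^*)-x^*,v_{n_k}-x^*\rangle=\langle g(x^*)-x^*,p-x^*\rangle\le 0$, which establishes \eqref{3.11}.

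The main obstacle I anticipate is this third step: correctly separating the gradient contribution $\tau_{n_k}\nabla f(u_{n_k})$ from $Tu_{n_k}-u_{n_k}$ in fact (b), and then orchestrating the three weak-closedness and demiclosedness arguments so that the \emph{single} cluster point $p$ simultaneously lies in $Fix\,(T)$, in $C$, and satisfies $Ap\in Q$. Keeping track of which quantities converge strongly to $0$ versus only weakly, and performing the nested subsequence extractions without losing the defining property of $\{n_{k_j}\}$, is where the care lies; by contrast, the vanishing of the $\Lambda_{n_k}$ terms and the final appeal to \eqref{1.7} are routine.
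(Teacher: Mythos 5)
Your proof is correct and follows essentially the same route as the paper's: decompose the vanishing of $\Psi_{n_k}$ into its three nonnegative summands, deduce $f(u_{n_k})\to 0$, $\|Tu_{n_k}-u_{n_k}\|\to 0$, $\|u_{n_k}-y_{n_k}\|\to 0$ and hence $\|v_{n_k}-u_{n_k}\|\to 0$, then pass to a weak cluster point lying in $Fix\,(T)\cap SFP(C,Q)$ via demiclosedness and weak closedness, and finish with the variational inequality. If anything, your write-up is slightly more explicit than the paper's at three points it glosses over: the vanishing of the first four terms of $\Lambda_{n_k}$, the bound $\|\nabla f(u)\|\le \|A\|\sqrt{2 f(u)}$ (replacing the paper's Lipschitz-boundedness argument), and the fact that $x^*$ must solve \eqref{1.7} for the final step, a hypothesis the lemma's statement leaves implicit.
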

\begin{proof}
Assume \eqref{3.12} holds. Then, by \eqref{psi} one deduces that all terms in the expression of $\Psi_{n_k}$ tend to zero as $k\rightarrow \infty$. So,
$$
\lim_{k\rightarrow \infty} \rho_{n_k}(4-\rho_{n_k})\frac{f^2(u_{n_k})}{\|\nabla f(u_{n_k})\|^2}=0
$$
and, based on assumptions $(c_1)$-$(c_5)$, it follows that in fact
\begin{equation}\label{3.13}
\lim_{k\rightarrow \infty} \frac{f^2(u_{n_k})}{\|\nabla f(u_{n_k})\|^2}=0.
\end{equation}
On the other hand since, by Lemma \ref{lem3},  $\nabla f(u_{n_k})$ is Lipschitzian, it follows that $\|\nabla f(u_{n_k})\|$ is bounded and therefore by \eqref{3.13} we deduce that $f(u_{n_k})\rightarrow 0$ as $k\rightarrow \infty$. which implies that 
$$
\lim_{k\rightarrow \infty} \|(I-P_Q) Au_{n_k}\|=0.
$$

By \eqref{3.12} we also get
\begin{equation}\label{3.14}
\lim_{k\rightarrow \infty} \|T u_{n_k} -u_{n_k}+\tau_{n_k} A^*(I-P_Q) Au_{n_k}\|^2=0
\end{equation}
and due to the fact that
\begin{equation}\label{3.14a}
\lim_{k\rightarrow \infty} \tau_{n_k}\|\nabla f(u_{n_k})\|=\lim_{k\rightarrow \infty} \frac{\rho_{n_k}f(u_{n_k}}{\|\nabla f(u_{n_k})\|}=0,
\end{equation}
we obtain
\begin{equation}\label{3.15}
\lim_{k\rightarrow \infty} \|T u_{n_k} -u_{n_k}\|=0.
\end{equation}
On the other hand, by \eqref{3.12} we also obtain

\begin{equation}\label{3.16}
\lim_{k\rightarrow \infty} \|(I-P_C)\left((1-\delta_{n_k})(u_{n_k}-\tau_{n_k} \nabla f(u_{n_k}))+\delta_{n_k} T u_{n_k}\right)\|=0
\end{equation}
which, by using the definition of $y_{n_k}$, yields
$$
\lim_{k\rightarrow \infty} \|(1-\delta_{n_k}) (u_{n_k}-\tau_{n_k}\nabla f(u_{n_k}))+\delta_{n_k} T u_{n_k}-y_{n_k}\|=0
$$
and this can be written in the expanded form
\begin{equation}\label{3.17}
\lim_{k\rightarrow \infty} \|(1-\delta_{n_k}) u_{n_k}-(1-\delta_{n_k})\tau_{n_k}\nabla f(u_{n_k}))+\delta_{n_k} T u_{n_k}-y_{n_k}\|=0.
\end{equation}
By \eqref{3.17} and \eqref{3.14a} we get
$$
\lim_{k\rightarrow \infty} \|(1-\delta_{n_k}) u_{n_k}+\delta_{n_k} T u_{n_k}-y_{n_k}\|=0
$$
which means that
\begin{equation}\label{3.18}
\lim_{k\rightarrow \infty} \|u_{n_k}-y_{n_k}+\delta_{n_k} (T u_{n_k}-u_{n_k})\|=0.
\end{equation}
Now, using the fact that
$$
\|u_{n_k}-y_{n_k}\|=\|u_{n_k}-y_{n_k}+\delta_{n_k} (T u_{n_k}-u_{n_k})-\delta_{n_k} (T u_{n_k}-u_{n_k})\|
$$
$$
\leq \|u_{n_k}-y_{n_k}+\delta_{n_k} (T u_{n_k}-u_{n_k})\|+\delta_{n_k} \|T u_{n_k}-u_{n_k}\|,
$$
by \eqref{3.15} and \eqref{3.18} one immediately obtain
\begin{equation}\label{3.19}
\lim_{k\rightarrow \infty} \|u_{n_k}-y_{n_k}\|=0.
\end{equation}
By using the definition of $v_n$ in \eqref{veul}, we have
$$
\|v_{n_k}-u_{n_k}\|=\left\|\frac{\beta_{n_k}}{1-\alpha_{n_k}} u_{n_k}+\frac{\gamma_{n_k}}{1-\alpha_{n_k}}y_{n_k}-u_{n_k}\right\|
$$
$$
=\left\|\frac{\gamma_{n_k}}{1-\alpha_{n_k}} u_{n_k}+\frac{\gamma_{n_k}}{1-\alpha_{n_k}}y_{n_k}\right\|
$$
$$
=\frac{\gamma_{n_k}}{1-\alpha_{n_k}}\cdot \|y_{n_k}-u_{n_k}\|
$$
which, by \eqref{3.19}, yields
\begin{equation}\label{3.20}
\lim_{k\rightarrow \infty} \|v_{n_k}-u_{n_k}\|=0.
\end{equation}
Since $I-S$ is demiclosed at zero and $T=(1-\lambda)I+\lambda T$, it follows that $T$ is also demiclosed at zero. By means of \eqref{3.15}, this implies that $\omega_w(u_{n_k})\subset Fix\,(T)$.

So, we can choose a subsequence $u_{n_{k_j}}$ of $u_{n_{k}}$ with the following property
$$
\limsup_{k\rightarrow \infty}\langle g(x^*)-x^*,u_{n_{k}}-x^*\rangle=\lim_{j\rightarrow \infty}\langle g(x^*)-x^*,u_{n_{k_j}}-x^*\rangle.
$$
We can assume, without any loss of generality that for the above subsequence $u_{n_{k_j}}$ one has  $u_{n_{k_j}}\rightharpoonup u'$. 

Since $f(u_{n_k})\rightarrow 0$, one obtains
$$
0\leq f(u')\leq \liminf\limits_{j\rightarrow \infty} f(u_{n_{k_j}})=0
$$
which implies $f(u')=0$ and $Au'\in Q$. 

Therefore, by \eqref{3.19}, $u'\in SFP(C,Q)$ and so $u'\in Fix\,(T)\cap SFP(C,Q)$. Now, based on \eqref{3.20}, we get
$$
\limsup_{k\rightarrow \infty}\langle g(x^*)-x^*,v_{n_{k}}-x^*\rangle=\limsup_{k\rightarrow \infty}\langle g(x^*)-x^*,u_{n_{k}}-x^*\rangle
$$
$$
=\lim_{j\rightarrow \infty}\langle g(x^*)-x^*,u_{n_{k_j}}-x^*\rangle=\langle g(x^*)-x^*,u'-x^*\rangle\leq 0,
$$
which shows that  \eqref{3.11} holds.
\end{proof}

Now we are ready to state and prove the main result of our paper.

\begin{theorem}\label{th1}
Let $T:H_1\rightarrow H_1$ be a $k$-demicontractive mapping such that $I-T$ is demiclosed at zero, and $g:H_1\rightarrow H_1$ be a $c$-Banach contraction. Suppose that  $\{\alpha_n\}$, $\{\beta_n\}$,  $\{\gamma_n\}$,  $\{\delta_n\}$  are sequences in $(0,1)$ satisfying conditions $(c_1)$-$(c_5)$ in Lemma \ref{lem7}.
\medskip

If $Fix\,(T)\cap SFP(C,Q)\neq \emptyset$, then the sequence $\{x_n\}$ generated by Algorithm 1 converges strongly to an element $x^*\in Fix\,(T)\cap SFP(C,Q)$ which solves uniquely the variational inequality \eqref{1.7}.

\end{theorem}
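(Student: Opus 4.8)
The plan is to reduce the whole statement to the numerical convergence criterion of Lemma \ref{lem4}, applied to the sequence $a_n := \|x_n-x^*\|^2$, so that most of the analytic work is delegated to Lemmas \ref{lem6} and \ref{lem7}. First I would identify the target point. Put $\Omega := Fix\,(T)\cap SFP(C,Q)$; since by Lemma \ref{lem5} the averaged mapping governing Algorithm 1 is quasi-nonexpansive with the same fixed point set as the underlying demicontractive map, $\Omega$ is nonempty, closed and convex, so $P_{\Omega}$ is well defined. By the projection characterization of Lemma \ref{lem2}$(i)$, the variational inequality \eqref{1.7} is equivalent to $x^*=P_{\Omega}(g(x^*))$. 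As $P_{\Omega}$ is nonexpansive and $g$ is a $c$-contraction, $P_{\Omega}\circ g$ is a $c$-contraction and hence admits a unique fixed point by the Banach contraction principle; this is the unique solution $x^*$ of \eqref{1.7}, and the point to which I will show $\{x_n\}$ converges. Fixing this $x^*\in\Omega$, Lemma \ref{lem6} already yields boundedness of $\{\|x_n-x^*\|\}$, whence $\{u_n\}$, $\{v_n\}$, $\{y_n\}$ and $\{g(x_n)\}$ are bounded as well.

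The core of the argument is to produce the two recursions required by Lemma \ref{lem4}. For the first, I would start from the reduced form $x_{n+1}=\alpha_n g(x_n)+(1-\alpha_n)v_n$ in \eqref{3.8a}, apply Lemma \ref{lem1}$(1)$ to the splitting $x_{n+1}-x^*=(1-\alpha_n)(v_n-x^*)+\alpha_n(g(x_n)-x^*)$, and then use both the contraction estimate $\|g(x_n)-g(x^*)\|\le c\|x_n-x^*\|$ and the inertial bound $\|v_n-x^*\|\le\|u_n-x^*\|\le\|x_n-x^*\|+\epsilon_n$ coming from \eqref{3.8} and the definition of $u_n$. Collecting terms should yield exactly $a_{n+1}\le(1-\Gamma_n)a_n+\Gamma_n\Lambda_n$ with $\Gamma_n$ and $\Lambda_n$ as defined in Lemma \ref{lem7}. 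For the second recursion I would again expand $\|x_{n+1}-x^*\|^2$ via Lemma \ref{lem1}$(1)$, but this time substitute the sharp convexity estimate \eqref{eq-14} for $\|v_n-x^*\|^2$; the three nonnegative quantities appearing there are precisely the summands of $\Psi_n$ (see \eqref{psi}), while the remaining $\alpha_n$-order cross term is $\Phi_n=2\alpha_n\langle g(x_n)-v_n,x_{n+1}-x^*\rangle$, giving $a_{n+1}\le a_n-\Psi_n+\Phi_n$.

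It then remains to verify the three hypotheses of Lemma \ref{lem4}. Hypothesis $(i)$, $\sum_n\Gamma_n=2(1-c)\sum_n\alpha_n=\infty$, is immediate from $(c_3)$. Hypothesis $(ii)$, $\Phi_n\to0$, follows from $\alpha_n\to0$ together with the boundedness of $\{g(x_n)\}$, $\{v_n\}$ and $\{x_{n+1}\}$ established above, via Cauchy--Schwarz. Hypothesis $(iii)$ --- that $\lim_k\Psi_{n_k}\le0$ forces $\limsup_k\Lambda_{n_k}\le0$ along every subsequence --- is exactly the content of Lemma \ref{lem7}, once one notes that $\Psi_n\ge0$, so that $\lim_k\Psi_{n_k}\le0$ coincides with the hypothesis $\lim_k\Psi_{n_k}=0$ of that lemma. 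With all three in force, Lemma \ref{lem4} gives $a_n=\|x_n-x^*\|^2\to0$, i.e. $x_n\to x^*$ strongly, and $x^*$ is the unique solution of \eqref{1.7} by construction.

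The hard part is the exact bookkeeping in the first recursion: matching the expansion of $\|x_{n+1}-x^*\|^2$ to the prescribed $\Gamma_n$ and $\Lambda_n$, and in particular handling the inertial contribution, which surfaces as the term $\tfrac{2\epsilon_n}{\alpha_n}\|v_n-x^*\|$ inside $\Lambda_n$. What keeps $\{\Lambda_n\}$ bounded --- and hence makes hypothesis $(iii)$ meaningful rather than vacuous --- is condition $(c_2)$, which forces $\epsilon_n/\alpha_n\to0$; without it the inertial term could blow up. One must also ensure that the two recursions rest on a single consistent splitting of $\|x_{n+1}-x^*\|^2$, keeping careful track of the factors $(1-\alpha_n)$ and $\gamma_n/(1-\alpha_n)$ so that the same four sequences $\Gamma_n,\Lambda_n,\Psi_n,\Phi_n$ simultaneously satisfy both inequalities fed into Lemma \ref{lem4}. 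By contrast, the genuinely nontrivial step --- extracting a weakly convergent subsequence, identifying its limit $u'$ as a point of $\Omega$ through demiclosedness of $I-T$, and then invoking the variational inequality at $x^*$ --- has already been carried out inside Lemma \ref{lem7}, which is why the remaining assembly is essentially mechanical.
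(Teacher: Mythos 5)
Your proposal is correct and follows essentially the same route as the paper's own proof: identify $x^*$ as the unique fixed point of $P_{Fix\,(T)\cap SFP(C,Q)}\circ g$ (hence the unique solution of \eqref{1.7}), invoke Lemma \ref{lem6} for boundedness, derive the two recursions $\|x_{n+1}-x^*\|^2\leq(1-\Gamma_n)\|x_n-x^*\|^2+\Gamma_n\Lambda_n$ and $\|x_{n+1}-x^*\|^2\leq\|x_n-x^*\|^2-\Psi_n+\Phi_n$ from \eqref{3.8a}, \eqref{3.6}, \eqref{3.21} and \eqref{eq-14}, and close the argument with Lemma \ref{lem7} feeding hypothesis $(iii)$ of Lemma \ref{lem4}. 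Your writing of the cross term as $2\alpha_n\langle g(x_n)-v_n,x_{n+1}-x^*\rangle$ is in fact the consistent form (matching the definition of $\Phi_n$ and the correct application of Lemma \ref{lem1}$(1)$), whereas the paper's display \eqref{eq-21} carries $v_n-x^*$ there; this is a cosmetic discrepancy, not a difference in method.
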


\begin{proof}
Using the fact that any metric projection is nonexpansive, by one hand, and that the composition of a nonexpansive mapping and of a contraction is a contraction, too, on the other hand, it follows that $P_{Fix\,(T)\cap SFP(C,Q)} g$ is a $c$-contraction, since $g$ is a $c$-contraction.

Hence $P_{Fix\,(T)\cap SFP(C,Q)} g$ has a unique fixed point $x^*\in H_1$:
$$
x^*=P_{Fix\,(T)\cap SFP(C,Q)} g(x^*).
$$

Moreover, in view of Lemma \ref{lem2},  $x^*\in Fix\,(T)\cap SFP(C,Q)$ is a solution of the variational inequality \eqref{1.7}.

Let $p\in Fix\,(T)\cap SFP(C,Q)$ be arbitrary. By Lemma \ref{lem6}, it follows that the sequence $\{\|x_{n}-x^*\|\}$ is bounded.

Let $\{u_n\}$ be given by the corresponding inertial equation in \eqref{3.1}. The we have
$$
\|u_n-p\|^2=\|x_n+\theta_n(x_n-x_{n-1})\|^2
$$
which by applying Lemma \ref{lem1} yields
$$
\|u_n-p\|^2\leq \|x_n-p\|^2+2\theta_n\langle x_n-x_{n-1}, u_n-p\rangle
$$
$$
\leq \|x_n-p\|^2+2\theta_n \|x_n-x_{n-1}\|\|u_n-p\|\leq \|x_n-p\|^2+2\varepsilon_n \|u_n-p\|.
$$
Hence
\begin{equation}\label{3.21}
\|u_n-p\|^2\leq \|x_n-p\|^2+2\varepsilon_n \|u_n-p\|.
\end{equation}
As $p\in Fix\,(T)\cap SFP(C,Q)$ has been taken arbitrarily, we can let it to be $$p:=x^*=P_{Fix\,(T)\cap SFP(C,Q)} g(x^*).$$ By using \eqref{3.8a}, we have
$$
\|x_{n+1}-x^*\|^2=\|\alpha_n g(x_n)+(1-\alpha_n) v_n-x^*\|^2
$$
$$
=\|\alpha_n (g(x_n)-x^*)+(1-\alpha_n) (v_n-x^*)\|^2\leq \alpha_n^2 \|g(x_n)-x^*\|^2
$$
$$
+(1-\alpha_n)^2 \|v_n-x^*\|^2+2\alpha_n \langle g(x_n)-x^*, v_n-x^*\rangle -2\alpha_n^2 \langle g(x_n)-x^*, v_n-x^*\rangle
$$
$$
\leq \alpha_n^2 \|g(x_n)-x^*\|^2+(1-\alpha_n)^2 \|v_n-x^*\|^2+2\alpha_n \langle g(x_n)-x^*, v_n-x^*\rangle
$$
$$
+2\alpha_n^2 \|g(x_n)-x^*\|\| v_n-x^*\|=\alpha_n^2 \|g(x_n)-x^*\|^2+(1-\alpha_n)^2 \|v_n-x^*\|^2
$$
$$
+2\alpha_n \langle g(x_n)-g(x^*), v_n-x^*\rangle+2\alpha_n \langle g(x^*)-x^*, v_n-x^*\rangle
$$
$$
+2\alpha_n^2 \|g(x_n)-x^*\|\| v_n-x^*\|
$$
$$
\leq \alpha_n^2 \|g(x_n)-x^*\|^2+2\alpha_n^2 \|g(x_n)-x^*\|\| v_n-x^*\|+(1-\alpha_n)^2 \|v_n-x^*\|^2
$$
\begin{equation}\label{eq-18}
+\alpha_n\cdot c\cdot \left(\|x_{n}-x^*\|^2+\|v_n-x^*\|^2\right)+2\alpha_n \langle g(x^*)-x^*, v_n-x^*\rangle.
\end{equation}
Now, by \eqref{3.6} and \eqref{3.21} one obtains
\begin{equation}\label{eq-19}
\|v_n-x^*\|^2\leq \|x_n-x^*\|^2+2\varepsilon_n \|u_n-x^*\|,
\end{equation}
and so, by using \eqref{eq-18}, we deduce that
$$
\|x_{n+1}-x^*\|^2\leq \alpha_n^2 \|g(x_n)-x^*\|^2+2\alpha_n^2 \|g(x_n)-x^*\|\| v_n-x^*\|
$$
$$
+\alpha_n\cdot c\cdot \left(\|x_{n}-x^*\|^2+\|v_n-x^*\|^2\right)+2\varepsilon_n \|u_n-x^*\|+2\alpha_n \langle g(x^*)-x^*, v_n-x^*\rangle
$$
$$
+(1-\alpha_n)^2\cdot \left(\|x_{n}-x^*\|^2+2\varepsilon_n \|u_n-x^*\|\right)
$$
$$
=\alpha_n^2 \|g(x_n)-x^*\|^2+2\alpha_n^2 \|g(x_n)-x^*\|\| v_n-x^*\|
$$
$$
+\left(\alpha_n^2+(1-2\alpha_n (1-c))\right)\cdot \|x_{n}-x^*\|^2+\left(2\varepsilon_n (1-\alpha_n)^2+ 2\alpha_n c \varepsilon_n\right)\cdot \|u_n-x^*\|
$$
$$
+2\alpha_n \|g(x_n)-x^*\|\| v_n-x^*\|\leq \alpha_n^2 \|g(x_n)-x^*\|^2+2\alpha_n^2 \|g(x_n)-x^*\|\| v_n-x^*\|
$$
$$
+\alpha_n^2 \cdot \|x_{n}-x^*\|^2+\left(1-2 \alpha_n (1-c)\right) \|x_{n}-x^*\|^2
$$
$$
+4\varepsilon_n\cdot \left(\alpha_n \|g(x_n)-x^*\|^2+2 \alpha_n  \|g(x_n)-x^*\|\| v_n-x^*\|+\alpha_n  \|x_n-x^*\|^2\right.
$$
$$
\left. +\frac{4\varepsilon_n}{\alpha_n}\cdot \|u_n-x^*\|+2\langle g(x^*)-x^*, v_n-x^*\rangle\right)=\left(1-2 \alpha_n (1-c)\right) \|x_{n}-x^*\|^2
$$
$$
+2\alpha_n (1-c)\frac{1}{2 (1-c)}\left[\alpha_n \|g(x_n)-x^*\|^2+2 \alpha_n  \|g(x_n)-x^*\|\| v_n-x^*\|\right.
$$
\begin{equation}\label{eq-20}
\left. +\alpha_n  \|x_n-x^*\|^2+\frac{4\varepsilon_n}{\alpha_n}\cdot \|u_n-x^*\|+2\langle g(x^*)-x^*, v_n-x^*\rangle\right].
\end{equation}
On the other hand, by Lemma \ref{lem1} and the definition of $\{x_n\}$ we have
\begin{equation}\label{eq-21}
\|x_{n+1}-x^*\|^2=\|\alpha_n g(x_n)+(1-\alpha_n) v_n-x^*\|^2\leq \|v_n-x^*\|^2+2 \alpha_n \langle g(x_n)-v_n, v_n-x^*\rangle.
\end{equation}
Therefore, by combining \eqref{eq-14}, \eqref{3.21} and \eqref{eq-21}, and denoting $S=(1-\lambda) I+\lambda T$, we obtain successively
$$
\|x_{n+1}-x^*\|^2\leq \|u_{n}-x^*\|^2-(1-\delta_n)\cdot \frac{\gamma_n}{1-\alpha_n}\cdot \rho_n (4-\rho_n)\cdot \frac{f^2(u_n)}{\|\nabla f(u_n)\|^2}
$$
$$
-\frac{\gamma_n}{1-\alpha_n}\cdot\delta_n(1-\delta_n)\cdot\|S u_n-u_n+\tau_n A^*(I-P_Q)A u_n\|^2
$$
$$
-\frac{\gamma_n}{1-\alpha_n}\cdot\|(I-P_C)\left((1-\delta_n)(u_n-\tau_n A^*(I-P_Q)A u_n\right)+\delta_n Su_n\|^2
$$
$$
+2\alpha_n\cdot  \langle g(x_n)-v_n, v_n-x^*\rangle\leq \|x_{n}-x^*\|^2+2 \varepsilon_n \|u_n-x^*\|^2
$$
$$
-(1-\delta_n)\cdot \frac{\gamma_n}{1-\alpha_n}\cdot \rho_n (4-\rho_n)\cdot \frac{f^2(u_n)}{\|\nabla f(u_n)\|^2}
$$
$$
-\frac{\gamma_n}{1-\alpha_n}\cdot\delta_n(1-\delta_n)\cdot\|S u_n-u_n+\tau_n A^*(I-P_Q)A u_n\|^2
$$
$$
-\frac{\gamma_n}{1-\alpha_n}\cdot\|(I-P_C)\left((1-\delta_n)(u_n-\tau_n A^*(I-P_Q)A u_n\right)+\delta_n Su_n\|^2
$$
$$
+2\alpha_n\cdot  \langle g(x_n)-v_n, v_n-x^*\rangle
$$
which yields
$$
\|x_{n+1}-x^*\|^2\leq \|x_{n}-x^*\|^2+2 \varepsilon_n \|u_n-x^*\|^2-(1-\delta_n)\cdot \frac{\gamma_n}{1-\alpha_n}\cdot \rho_n (4-\rho_n)\cdot \frac{f^2(u_n)}{\|\nabla f(u_n)\|^2}
$$
$$
-\frac{\gamma_n}{1-\alpha_n}\cdot\delta_n(1-\delta_n)\cdot\|S u_n-u_n+\tau_n A^*(I-P_Q)A u_n\|^2
$$
$$
-\frac{\gamma_n}{1-\alpha_n}\cdot\|(I-P_C)\left((1-\delta_n)(u_n-\tau_n A^*(I-P_Q)A u_n\right)+\delta_n Su_n\|^2
$$
\begin{equation}\label{eq-22}
+2\alpha_n\cdot  \langle g(x_n)-v_n, v_n-x^*\rangle.
\end{equation}
Now, for $n\geq 1$, let us denote
$$
\Gamma_n:=2(1-c)\alpha_n;\, \Phi_n:=2\alpha_n\langle g(x_n)-v_n,x_{n+1}-x^*\rangle, 
$$
$$
\Lambda_n:=\frac{1}{2(1-c)}\left(\alpha_n\|g(x_n)-x^*\|^2+2\alpha_n \|g(x_n)-x^*\| \|v_n-x^*\|\right.
$$
$$
\left.+\alpha_n\|x_n-x^*\|^2+\frac{2\epsilon_n}{\alpha_n}\|v_n-x^*\|+2 \langle g(x^*)-x^*,v_{n}-x^*\rangle\right),
$$
and
$$
\Psi_n:=(1-\delta_n)\frac{\gamma_n}{1-\alpha_n}\rho_n(4-\rho)\frac{f^2(u_n)}{\|\nabla f(u_n)\|^2}
$$
$$
+\delta_n (1-\delta_n)\frac{\gamma_n}{1-\alpha_n}\cdot\|T u_n -u_n+\tau_n \nabla f(u_n)\|^2
$$
$$
+\frac{\gamma_n}{1-\alpha_n}\cdot \|(I-P_C)\left((1-\delta_n)(u_n-\tau_n \nabla f(u_n))+\delta_n T u_n\right)\|^2.
$$
In view of these notations, inequalities \eqref{eq-20} and  \eqref{eq-22} can be briefly written as 
$$
\|x_{n+1}-x^*\|^2\leq (1-\Gamma_n) \|x_{n}-x^*\|^2+\Gamma_n \Lambda_n,\,n\geq 1
$$
$$
\|x_{n+1}-x^*\|^2\leq \|x_{n}-x^*\|^2-\Psi_n+\Phi_n,\,n\geq 1,
$$
respectively.

By assumptions $(c_1)-(c_5)$ it is easy to deduce that 
$$
\lim_{n\rightarrow\infty} \Gamma_n=0,\,\lim_{n\rightarrow\infty}\Phi_n=0 \textnormal{ and } \sum_{n=0}^{\infty} \Gamma_n=\infty. 
$$
In the end, by applying Lemma \ref{lem7} and Lemma \ref{lem4}, it follows that
$$
\lim_{n\rightarrow\infty} \|x_n-x^*\|=0
$$
which shows that the sequence $\{x_n\}$ generated by Algorithm 1 converges strongly to $x^*$.
\end{proof}

\begin{remark} 
We note that the technique of proof of Theorem  \ref{th1}  is similar to that used in \cite{Wang-Xu} and is based on inserting an averaged component which produces a perturbed version of the inertial algorithm, thus imbedding the demicontractive mappings in the class of quasi nonexpansive mappings, in view  of Lemma \ref{lem5}.
 \end{remark}

\begin{ex} \label{ex2}
Let $H$ be the real line with the usual norm, $C = [0, 1]$ and $T$ be the mapping in Example \ref{ex1}. Since $T$ is demicontractive, our Theorem \ref{th1} can be applied to solve any consistent split feasibility problem over the set of fixed points of $T$, whenever $Fix\,(T)\cap SFP(C,Q)\neq \emptyset$.

We also note that  Theorem 2.1 in Qin and Wang \cite{Qin} cannot be applied to solve consistent split feasibility problems over the set of fixed points of $T$ (because $T$ is not nonexpansive) and also Theorem 1 in Wang et al. \cite{Wang-Xu} cannot be applied to the same problem (because $T$ is not quasi nonexpansive).
\end{ex}

\section{Numerical examples}

\begin{ex} \label{ex3}
We consider the problem given in Example 1 in Wang et al. \cite{Wang-Xu}, which is devoted to the solution of a linear system of equations $Ax=b$.  We work similarly in $H_1=H_2=\mathbb{R}^5$, with the same data, first by taking the mapping $S$ given by
$$
S=
\begin{pmatrix}
\frac{1}{3} & \frac{1}{3} & 0 & 0 & 0\\
0 & \frac{1}{3} & \frac{1}{3} & 0 & 0\\
0 & 0 & \frac{1}{3} & \frac{1}{3} & 0\\
0 & 0 & 0 & \frac{1}{3} & \frac{1}{3}\\
0 & 0 & 0 & 0 & 1
\end{pmatrix}
$$ 
and then considering a non viscosity type algorithm, i.e., taking the contraction mapping $g$ to be the null function $g\equiv 0$. To allow a numerical comparison, we also take
$$
A=
\begin{pmatrix}
1 & 1 & 2 & 2 & 1\\
0 & 2 & 1 & 5 & -1\\
1 & 1 & 0 & 4 & 1\\
2 & 0 & 3 & 1 & 5\\
2 & 2 & 3 & 6 & 1
\end{pmatrix},
\quad b=
\begin{pmatrix}
\frac{43}{16}\\
2\\
\frac{19}{16}\\
\frac{51}{8}\\
\frac{41}{8}
\end{pmatrix},
$$ 
\end{ex}
This is a particular example of a split feasibility problem with $C=Fix\,(S)$ and $Q=\{b\}$.

We performed several numerical experiments in MatLab by using  our Algorithm 1 with various particular values on the parameters  and compared the obtained results to those presented in Wang et al. \cite{Wang-Xu} (Table 1 and Figure 1). 

By analysing the diversity of the numerical results thus obtained, we noted a very interesting fact, i.e., that most of the assumptions on the parameters $\alpha_n$, $\beta_n$, $\delta_n$, $\theta_n$, $\tau_n$ involved in the iterative process \eqref{3.1} are in fact  imposed merely for technical reasons  when proving analytically the strong convergence of the sequence $\{x_n\}$ generated by Algorithm 1. 

Therefore, some of these assumptions appear to  be not necessary for the convergence of the iterative process $\{x_n\}$ in most practical situations, as shown by the numerical results presented in Table 1. 

These results were obtained for the same starting point $x$ like the one  in Wang et al. \cite{Wang-Xu} but with the following particular values of the involved parameters: $\beta_n=0$, $\delta_n=1$  (which do not satisfy all the assumptions in $(c_1)-(c_5)$), $\theta_n=0$ and $\lambda=0.5$.

It is also worth mentioning that we obtained the exact solution $x^*=(\dfrac{1}{16}, \dfrac{1}{8}, \dfrac{1}{4}, \dfrac{1}{2}, 1)$ of the problem after $n=33$ iterations.  

If we compare our numerical results to the results given Table 1 and Figure  in Wang et al. \cite{Wang-Xu}, where the authors have taken the values $\alpha_n=\frac{1}{10n}$, $\beta_n=0.5$, $\delta_n=0.5$,..., and the same starting point $x=\left(1, 1, 1, 1, 1\right)^T$ for  Algorithm (9), we observe that the exact solution was not obtained even after 10 000 iterations...


	\begin{table}[htbp]\resizebox{8cm}{!} 
{\begin{tabular}
{|c|@{}l|r|r|r|r|}
\hline
n& $x_n^{(1)}$& $x_n^{(2)}$& $x_n^{(3)}$& $x_n^{(4)}$&$x_n^{(5)}$\\
\hline\hline
0& 
1& 
1& 
1& 
1&
1\\
\hline

1& 
0.766667& 
0.766667& 
0.766667& 
0.766667&
1\\
\hline

2& 
0.587778&    
0.587778& 
0.587778& 
0.642222&
1\\
\hline

3& 
0.450630&    
0.450630& 
0.463333& 
0.575852&
1\\
\hline

4& 
0.345483&    
0.348447& 
0.381477& 
0.540454&
1\\
\hline

5& 
0.265562&    
0.274850& 
0.329560& 
0.521576&
1\\
\hline

6& 
0.205764&    
0.223484& 
0.297466& 
0.511507&
1\\
\hline

 7& 
0.161887&    
0.188600& 
0.278000& 
0.506137&
1\\
\hline

 8& 
0.130347&    
0.165454& 
0.266366& 
0.503273&
1\\
\hline

9& 
0.108124&    
0.150394& 
0.259492& 
0.501746&
1\\
\hline

10& 
0.092758&    
0.140758& 
0.255470& 
0.500931&
1\\
\hline

 11& 
0.082315&    
0.134681& 
0.253134& 
0.500497&
1\\
\hline

12& 
0.075327&    
0.130894& 
0.251788& 
0.500265&
1\\
\hline

 13& 
0.070716&    
0.128561& 
0.251015& 
0.500141&
1\\
\hline

14& 
0.067713&    
0.127136& 
0.250574& 
0.500075&
1\\
\hline

15& 
0.065779&    
0.126273& 
0.250324& 
0.500040&
1\\
\hline

...& 
...&    
...& 
...& 
...&
...\\
\hline

20&    
0.062790&    
0.125089& 
0.250018& 
0.500002&
1\\
\hline

...& 
...&    
...& 
...& 
...&
...\\
\hline

32&    
0.062501&       
0.125000& 
0.250000& 
0.500000&
1\\
\hline

33&    
0.062500&       
0.125000& 
0.250000& 
0.500000&
1\\
\hline

\hline
\end{tabular}}
\end{table}

\medskip

Table 1. Numerical results for the starting point $x=(1, 1, 1, 1, 1)^T$

\medskip

In fact,   both algorithms considered in Wang et al. \cite{Wang-Xu}, i.e., algorithms (7) and (9), are extremely slow: even after performing 10000 iterations the exact solution $x^*$ is obtained with an error of $9.4925\times 10^{-5}$.

In our opinion, this is because any inertial type algorithm (i.e., with $\theta_n\neq 0$) is usually slower than the non inertial ones, as illustrated by the numerical examples in Table 1, see also the results reported in Berinde \cite{Ber22}, but for a slightly different context.

\section{Conclusions}

1. We introduced a hybrid inertial self-adaptive algorithm for solving the split feasibility problem and fixed point problem in the class of demicontractive mappings. 

2. As shown by Example \ref{ex2}, our theoretical results extend several related results existing in literature from the class of nonexpansive or quasi-nonexpansive mappings to the larger class of demicontractive mappings. 

3.  We performed numerical experiments, see Example \ref{ex3}, designed to compare our results to those presented in Wang et al. \cite{Wang-Xu}. The numerical results presented in Table 1 clearly illustrates the superiority of our  results  over the related existing ones in literature. These numerical results also naturally raise an open problem: find weaker conditions on the parameters $\alpha_n$, $\beta_n$, $\delta_n$, $\theta_n$, $\tau_n$  such that  the iterative process \eqref{3.1} still converges to an element $x^*\in Fix\,(T)\cap SFP(C,Q)$.

4. For other related works that allow similar developments to the ones in the current paper, we refer the readers to Kingkam and Nantadilok \cite{Kingkam}, Padcharoen et al. \cite{Pad}, Sharma and Chandok \cite{Sharma}, Shi et al. \cite{Shi}, Tiammee and Tiamme \cite{Tiam},...

\medskip

{\bf Acknowledgements}
\medskip

Not applicable.
\medskip

{\bf Funding}
\medskip

Not applicable.
\medskip

{\bf Availability of data and materials}
\medskip

No data were used to support this study.
\medskip

{\bf Competing interests}
\medskip

The author declares no competing interests.
\medskip

{\bf Author details}
\medskip

$^{1}$ Department of Mathematics and Computer
Science
North University Centre at Baia Mare
Technical University of Cluj-Napoca
Victoriei 76, 430072 Baia Mare Romania;
E-mail: vasile.berinde@mi.utcluj.ro
\medskip

$^{2}$ Academy of Romanian Scientists, 3 Ilfov, 050044, Bucharest, Romania;
E-mail: vasile.berinde@gmail.com

\end{document}